\documentclass[a4paper,12pt,leqno]{amsart}
\usepackage[utf8]{inputenc}

\usepackage{amssymb,amsmath}
\usepackage{lmodern}

\usepackage{tikz}
\usetikzlibrary{fadings}
\usetikzlibrary{patterns}
\usetikzlibrary{decorations.pathmorphing}

%\DeclareMathAlphabet{\mathpzc}{OT1}{pzc}{m}{it}

\setlength{\parskip}{.5cm}

% mathsf
%\newcommand{\ro}{\mathsf{ro}}
%\newcommand{\sP}{$P$}

%\newcommand{\bbQ}{\mathbb{Q}}
%\newcommand{\bbN}{\mathbb{N}}
\newcommand{\bbR}{\mathbb{R}}
%\newcommand{\bbP}{\mathbb{P}}
%\newcommand{\bbL}{\mathbb{L}}
%\newcommand{\bbZ}{\mathbb{Z}}

%\newcommand{\calH}{\mathcal{H}}
%\newcommand{\calG}{\mathcal{G}}
%\newcommand{\calF}{\mathcal{F}}
%\newcommand{\calL}{\mathcal{L}}
%\newcommand{\calP}{\mathcal{P}}
%\newcommand{\calR}{\mathcal{R}}
%\newcommand{\calA}{\mathcal{A}}
%\newcommand{\calB}{\mathcal{B}}

%\newcommand{\Emb}{\mathpzc{F}}
%\newcommand{\emb}{\frak{f}}

%operatornname
\newcommand{\cl}{\operatorname{cl}}
%\newcommand{\dom}{\operatorname{dom}}
%\newcommand{\CO}{\operatorname{\mathsf{CO}}}
%\newcommand{\RO}{\operatorname{RO}}
%\newcommand{\successors}{\operatorname{succ}}
%\newcommand{\w}{\operatorname{w}}
%\newcommand{\ord}{\operatorname{ord}}

%renewcommand
\renewcommand{\epsilon}{\varepsilon}
\renewcommand{\int}{\operatorname{int}}
\renewcommand{\phi}{\varphi}

%\newcommand{\inte}{\operatorname{int}}

%newtheorems
\newtheorem{thm}{Theorem}
\newtheorem{pro}[thm]{Proposition}
\newtheorem{lem}[thm]{Lemma}

\theoremstyle{definition}
\newtheorem{exm}{Example}

\renewcommand{\geq}{\geqslant}
\renewcommand{\leq}{\leqslant}

%\newcommand{\bfK}{\mathbf{K}}
%\newcommand{\bfL}{\mathbf{L}}
%\newcommand{\id}{\operatorname{id}}
%\newcommand{\Lev}{\operatorname{Lev}}
%\newcommand{\cf}{\operatorname{cf}}

%\newcounter{czesc}
%\setcounter{czesc}{1}

%\newcommand{\czesc}[1]{\bigskip\noindent\textbf{\arabic{czesc}. #1}\stepcounter{czesc}\quad}
\subjclass[2010]{Primary 54C30, 54G20; Secondary 54D70, 26A24}
\keywords{Topology modifications, the Niemytzki plane, real functions}

\title{On some modifications of the Niemytzki plane}
\author{Wojciech Bielas}
\begin{document}

\begin{abstract}
  We present a criterion that compares   modifications of the Niemytzki plane.
  It follows that if usual tangent discs of the Niemytzki plane are replaced by triangles with bounded angles, then the resulting space is not homeomorphic to the former.
\end{abstract}
\maketitle

\section{Introduction}
The Niemytzki plane is a classical object in general topology and its modifications still appear as counterexamples, compare \cite{Engelking} or \cite{Seebach}.
Recently, there have been attempts to generalise it to larger dimensions, \cite{Chatyrko}.
Following Hattori \cite{Hattori}, new topologies on the closed upper plane which lie between the usual topology and the topology of the Niemytzki plane are discussed in \cite{Abuzaid}.
In this paper we  show that  the topology of the Niemytzki plane depends on  the shape of tangent discs and that changing this shape may change the topology.
We present  a quantitative criterion controlling such changes, see Lemma \ref{thm:1}.
Replacing tangent discs with appropriate triangles results in a finer topology, which is not homeomorphic to the Niemytzki plane.
% This pair of topologies, generated by tangent circles and by triangles, respectively, gives also an example of three topologies $\tau_1\subsetneq\tau_2\subsetneq\tau_3$ on the same set $X$ such that $(X,\tau_1)\not\cong (X,\tau_2)\not\cong(X,\tau_3)$ but $(X,\tau_1)\cong (X,\tau_3)$.
Additionally, we show that obtaining the mentioned criterion is possible because monotone  functions $g\colon \bbR\to\bbR$ are differentiable almost everywhere, which results from the Lebesgue  monotone differentiation theorem, and if the derivative $g'(x)$ exists, then 
\begin{equation}\label{eqn:1}
 \textstyle\underset{h\to 0}{\liminf}\;\frac{g(x+\phi(h))-g(x-\phi(h))}{2\phi(h)} \frac {2\psi(h)}{g(x+\psi(h))-g(x-\psi(h))}\leq 1
 \end{equation}
 for any continuous functions $\phi,\psi\colon (0,\infty)\to(0,\infty)$ such that $\phi\leq\psi$ and $\lim_{h\to 0}\psi(h)=0$.
 If $g'(x)\neq 0$, then the lower limit in (\ref{eqn:1}) equals $1$, but the case $g'(x)=0$ needs an extra effort.

Our notation and topological terminology is fairly standard, as in \cite{Engelking} or \cite{Seebach}, but notions concerning differentiation are taken from standard textbooks on real analysis.
%For example, $f\colon (a,b)\to\bbR$ is differentiable at $x$ if and only if  the difference quotient $\frac{f (x + h) - f (x)}h$ is convergent  as $h$ tends to $0$.

We will use the Jordan curve theorem, compare \cite[p. 31]{Moise}. 

\section{Topologies induced by families of functions}
Here, we present a method for obtaining some modifications of the Niemytzki plane.
If $f\colon [-a,a]\to[0,\infty)$ is a continuous function, then put
$$U(0,f)=\{(0,0)\}\cup\{(x,y)\in\bbR^2\colon f(x)< y<\max f[[-a,a]]\}$$
and  $U(x,f)=\{(y+x,z):(y,z)\in U(0,f)\}$, shortly $U(x,f)=U(0,f)+(x,0)$.
We say that a  family of functions
$$f_n\colon [-a_n,a_n]\to[0,\infty)$$
is \emph{basic}, if  the following conditions are fulfilled.
\begin{itemize}
\item Each $f_n$ is continuous and  even.
\item The range of any $f_n$ is $[0,\frac1n]$, so $\max f_n[[-a_n,a_n]]=\frac1n$.
\item Always the restriction  $f_n|_{[0,a_n]}$ is one-to-one and $f_n(0)=0$.
\item If $m<n$, then the Euclidean closure of $U(0,f_n)$ is contained in the set $U(0,f_m)$.
\end{itemize}
Given a basic family $\{f_n\colon n\geq 1\}$,  on the half-plane $H=\{(x,y)\in\bbR^2\colon y\geq 0\}$ let us introduce a topology generated by  the Euclidean topology and the sets  $U(x,f_n)$.
Note that, if
$$\textstyle f_n(x)=\frac1n-\sqrt{\frac1{n^2}-x^2},$$
then the family $\{f_n|_{[-\frac1n,\frac1n]}\colon n\geq 1\}$ is basic and generates the topology of the Niemytzki plane.

It is straightforward  to check that if functions $f_n\colon [-a_n,a_n]\to[0,\infty)$ constitute a basic family, then the restriction  $f_n|_{[0,a_n]}$ is strictly increasing, the restriction $f_n|_{[-a_n,0]}$ is strictly decreasing, in fact, both restrictions  are homeomorphisms, therefore their images are arcs with endpoints $(0,0)$, $(a_n,\frac1n)$, and $(0,0),(-a_n,\frac1n)$, respectively.
But $U(x,f_n)$ is a translation of $U(0,f_n)$ by the vector $(x,0)$.
The induced topology gives the same  closure of  $U(x,f_n)$ as in the Euclidean topology, so we denote this closure $\cl U(x,f_n)$.
%Further properties of basic families should be explained in more detail.

The topology of the Niemytzki plane can be induced by parabolas, which is shown in the  example below.
\begin{exm}\label{exm:1}
  Let $a_n=\frac1{n}$, $p_{n}\colon [-a_n,a_n]\to[0,\frac1n]$, $p_{n}(x)=nx^2$ for  $n\geq 1$.
For all $x\in[-\frac1n,\frac1n]$ we have
  $$\textstyle\frac1n-\sqrt{\frac1{n^2}-x^2}\leq nx^2$$
  and for all $x\in[-\frac1{2n},\frac1{2n}]$ we have
  $$\textstyle nx^2\leq\frac1{2n}-\sqrt{\frac1{4n^2}-x^2}.$$
These give inclusions
$$\textstyle B((0,\frac1{2n}),\frac1{2n})\subseteq U(0,p_n)\subseteq B((0,\frac1n),\frac1n),$$
which are sufficient to check that sets $\{U(0,p_n)\colon n\geq 1\}$ constitute a base at $(0,0)$ for the Niemytzki plane, hence sets $\{U(x,p_n)\colon n\geq 1\}$ constitute a base at $(x,0)$, too.
\end{exm}

Recall that if $0<s<t$, then $\lim_{x\to0}\frac{m|x|^t}{n|x|^s}=0$ for any $m,n\geq 1$.
For this reason,  higher parabolas induce different nevertheless homeomorphic topologies, as is shown in the next example.

\begin{exm}\label{exm:2}
  Let $a_n=n^{-2/s}$ and $p_{s,n}\colon [-a_n,a_n]\to[0,\frac1n]$, and $p_{s,n}(x)=n|x|^s$, where $s\in(0,\infty)$ and $n\geq 1$.
  Let us denote by $N_s$ the half-plane $H$ with the topology induced by the basic family $\{p_{s,n}\colon n\geq 1\}$.
  Then for all $s,t\in(0,\infty)$ the function $f\colon N_s\to N_t$, given by the formula $f(x,y)=(x,y^{t/s})$, is a homeomorphism.
  Each of the spaces $N_s$ is homeomorphic to the Niemytzki plane, since it is homeomorphic to $N_2$.
\end{exm}

Note that for $s=1$ we obtain triangles as neighbourhoods of $(0,0)$, but they are arbitrarily ,,slim'', i.e. their  angles at the vertex $(0,0)$ tend to $0$.
% , contrary to Example \ref{exm:3}.

\begin{pro}\label{pro1}
 Given a basic family  $\{f_n\colon n\geq 1\}$, if $a<b$ and $U(a,f_n)\cap U(b,f_n)\neq\emptyset$, then
  \begin{enumerate}
  \item[$(a)$] the complement of $U(a,f_n)\cup U(b,f_n)$  has two connected components $C$ and $D$, one of which, say $C$, is bounded,
  \item[$(b)$] for any $(u,w)\in C$ and $(v,z)\in \cl U(a,f_n)\cap \cl U(b,f_n)$ we have
    $$\textstyle w\leq f_n(\frac{b-a}2)\leq z,$$
      \item[$(c)$] for any  $(u,w)\in C$ there exists $c,d\in[a,b]$ such that $c\leq d$ and $(u,w)= (\frac{c+d}2,f_n(\frac{d-c}2)).$
  \end{enumerate}
\end{pro}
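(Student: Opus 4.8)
The plan is to read off everything from the lower boundary of the two caps. Put $m=\frac{a+b}2$. From $U(a,f_n)\cap U(b,f_n)\neq\emptyset$ one gets $\frac{b-a}2<a_n$, so the point $P=(m,f_n(\frac{b-a}2))$ lies strictly below the top line $y=\frac1n$. I would consider the two lower cap-arcs $A_a=\{(u,f_n(u-a)):a\le u\le m\}$ and $A_b=\{(u,f_n(u-b)):m\le u\le b\}$, which are arcs by the properties of a basic family and, since $f_n$ is even, both end at $P$; together with the base segment $S=\{(u,0):a\le u\le b\}$ they meet only at the three points $P$, $(a,0)$, $(b,0)$. Hence $J=A_a\cup A_b\cup S$ is a simple closed curve, and I would apply the Jordan curve theorem to it: $J$ bounds a region $I=\int J$, which for this explicit curve is the open curvilinear triangle $\{(u,w):a<u<b,\ 0<w<\min(f_n(u-a),f_n(u-b))\}$, with unbounded exterior.

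Let $E$ denote the complement of $U(a,f_n)\cup U(b,f_n)$ in $H$. I would set $C=E\cap\overline I$ and $D=E\setminus C$, so that $C$ and $D$ partition $E$. A direct check gives $C=\overline I\setminus\{(a,0),(b,0)\}$, since the two vertices lie in the union while every other point of $\overline I$ lies below both cap-graphs and hence in $E$. This $C$ is path-connected, because each of its points is joined to the base $S$ by a vertical segment contained in $C$, and it is bounded. Moreover $C$ is clopen in $E$: crossing $A_a$ or $A_b$ upward enters one of the open caps and crossing $S$ downward leaves $H$, so near any point of $C$ the set $E$ lies entirely on the side of $C$; thus $C$ is exactly one component. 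Finally $D$ is connected and unbounded, because every point of $D$ can be joined, by a path that first moves clear of both caps and then upward, to the connected unbounded region $\{w>\frac1n\}\subseteq D$. This proves $(a)$.

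Parts $(b)$ and $(c)$ then follow from the description $C=\overline I\setminus\{(a,0),(b,0)\}$. For $(b)$, any $(u,w)\in C$ satisfies $w\le\min(f_n(u-a),f_n(u-b))\le f_n(\frac{b-a}2)$ because $\min(|u-a|,|u-b|)\le\frac{b-a}2$ and $f_n$ is even and increasing on $[0,a_n]$; dually, any $(v,z)\in\cl U(a,f_n)\cap\cl U(b,f_n)$ satisfies $z\ge\max(f_n(v-a),f_n(v-b))\ge f_n(\frac{b-a}2)$, now using $\max(|v-a|,|v-b|)\ge\frac{b-a}2$. For $(c)$, given $(u,w)\in C$ I would put $c=u-f_n^{-1}(w)$ and $d=u+f_n^{-1}(w)$, where $f_n^{-1}$ is the inverse of the increasing branch $f_n|_{[0,a_n]}$; the inequalities $w\le f_n(u-a)$ and $w\le f_n(u-b)$ defining $C$ turn into $a\le c\le d\le b$, while $u=\frac{c+d}2$ and $w=f_n(\frac{d-c}2)$ hold by construction.

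The main obstacle is part $(a)$. The delicate point is that the ambient space is the half-plane $H$, so the $x$-axis acts as an impenetrable floor and the two cap-vertices $(a,0),(b,0)$, which belong to the union, pinch the bounded region $C$ off from the unbounded region $D$ even though the closures of $C$ and $D$ meet at those vertices; in $\bbR^2$ the region would instead escape downward and the complement would be connected. The Jordan curve theorem is precisely what makes the separation into two components rigorous, while the remaining verifications are routine consequences of the evenness and monotonicity of $f_n$.
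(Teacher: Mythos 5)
Your proof is correct and takes essentially the same route as the paper: part $(a)$ rests on the Jordan curve theorem applied to the boundary of the curvilinear triangle, and part $(c)$ uses the very same choice $c=u-f_n^{-1}(w)$, $d=u+f_n^{-1}(w)$. You merely supply the details the paper delegates to the picture, via the explicit description of $C$ as the set of points on or below both lower arcs; the only cosmetic caveat is that when $a_n<b-a<2a_n$ one of $f_n(u-a)$, $f_n(u-b)$ may be undefined for some $u\in(a,b)$, so your $\min$ should be read as the height of the Jordan curve there.
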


\begin{center}
  \begin{tikzpicture}[scale=.95]
    \draw[->] (-6.2,0) -- (6.2,0);
    \def\x{-2.5}
    \def\y{2.5}
    \def\r{.05}
    \draw[fill] (\x,0) circle[radius=\r];
    \draw[fill] (\x,-.3) node {$_{(a,0)}$};
    \draw[fill] (\y,0) circle[radius=\r];
    \draw[fill] (\y+.2,-.3) node {$_{(b,0)}$};
    \draw (\x,0) parabola (\x+3.5,4);
    \draw (\x,0) parabola (\x-3.5,4);
    \draw (\y,0) parabola (\y-3.5,4);
    \draw (\y,0) parabola (\y+3.5,4);
    \draw (-6,4) -- (6,4);
    \draw[fill] (0,2.05) circle[radius=\r];
    \draw[fill] (-1.5,2.05) node {$(\frac{a+b}2,f_n(\frac{b-a}2))$};
    \draw (-4.7,.4) node {$_{y=f_n(x-a)}$};
    \draw (4.7,.4) node {$_{y=f_n(x-b)}$};
    \draw[fill] (.5,.75) circle[radius=\r];
    \draw[fill] (-.2,.8) node {$_{(u,w)}$};
    \def\z{-1}
    \def\w{2}
    \draw[dashed] (\z,0) parabola (\z+3.5,4);
    \draw[dashed] (\z,0) parabola (\z-3.5,4);
    \draw[dashed] (\w,0) parabola (\w-3.5,4);
    \draw[dashed] (\w,0) parabola (\w+3.5,4);
    \draw[fill] (2,0) circle[radius=\r];
    \draw[fill] (-1,-.3) node {$_{(c,0)}$};
    \draw[fill] (-1,0) circle[radius=\r];
    \draw[fill] (1.8,-.3) node {$_{(d,0)}$};
    \draw (-3.4,3.7) node {$_{y=f_n(x-c)}$};
    \draw (4.3,3.7) node {$_{y=f_n(x-d)}$};
    \draw[fill] (.3,3.75) circle[radius=\r];
    \draw[fill] (-.2,3.75) node {$_{(v,z)}$};
    \draw[fill] (\x-3.5,0) circle[radius=\r];
    \draw[fill] (\x-3.5,-.3) node {$_{(a-a_n,0)}$};
    \draw[dotted] (\x-3.5,0) -- (\x-3.5,4);
    \draw[fill] (\y+3.5,0) circle[radius=\r];
    \draw[fill] (\y+3.5,-.3) node {$_{(b+a_n,0)}$};
    \draw[dotted] (\y+3.5,0) -- (\y+3.5,4);

  \end{tikzpicture}
\end{center}

\begin{proof}
  Condition $(a)$ follows directly from the Jordan curve theorem, where we have to consider two plane simple closed curves, determined by arc with endpoints $(a,0),(\frac{a+b}2,f_n(\frac{b-a}2)),(b,0)$ (the boundary of $C$) and $(a,0),(b,0),(b+a_n,\frac1n),(a-a_n,\frac1n)$ (the boundary of $D$).
Condition $(b)$ can be verified by examining the above picture.

Given $(u,w)\in C$, consider translations of the graph of $f_n$ so that
$$c=u-f_n^{-1}(w)\mbox{ and }d=u+f^{-1}_n(w).$$

If $u=\frac{c+d}2\geq\frac{a+b}2$, then
$$\textstyle c=u-f^{-1}_n(w)\geq \frac{a+b}2-\frac{b-a}2=a,$$
by condition $(b)$.
Since $c\leq u\leq b$, we have $c\leq b$.

If $u=\frac{c+d}2\leq\frac{a+b}2$, then
$$\textstyle u-a\leq \frac{a+b}2-a=\frac{b-a}2,$$
hence $u-a$ belongs to the domain of $f_n$ and we have $f_n(u-c)\leq f_n(u-a)$. Since $u-c,u-a\geq 0$, we obtain $u-c\leq u-a$, which gives $a\leq c$.
Similarly, we show that $d\in[a,b]$, which  completes the proof of condition $(c)$.
\end{proof}

\section{Topologies induced by basic families}
Now, we present a necessary condition (a criterion) to have a homeomorphism between two topologies induced by basic families.

\begin{lem}\label{thm:1}
  Assume that $\{p_n\colon n\geq 1\}$ and $\{t_n\colon n\geq 1\}$ are basic families, inducing topologies $\tau_0$ and $\tau_1$, respectively.
  If spaces $(H,\tau_0)$ and $(H,\tau_1)$ are homeomorphic, then there exist $\epsilon>0$ and functions $\gamma,\delta\colon (0,\epsilon)\to(0,\infty)$ such that for every  $m>n$ there exists  $k$ such that $\lim_{h\to 0}\delta(h)=0$ and $\liminf_{h\to0}\gamma(h)\leq 1$, and
  $$t_1(\delta(x))\leq t_k\bigg(\frac{p_m^{-1}(x)\delta(x)\gamma(x)}{p^{-1}_n(x)}\bigg)$$
  for any $x\in (0,\epsilon)$.
\end{lem}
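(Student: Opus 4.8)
The plan is to extract the boundary behaviour of a homeomorphism and feed it into inequality~(\ref{eqn:1}). First I would show that any homeomorphism $h\colon (H,\tau_0)\to(H,\tau_1)$ carries the axis $L=\{(x,0)\colon x\in\bbR\}$ onto itself. The axis is topologically distinguished: a point with $y>0$ is locally Euclidean, since both $\tau_0$ and $\tau_1$ coincide with the Euclidean topology off $L$, whereas no point of $L$ has a neighbourhood homeomorphic to a Euclidean open set (its basic neighbourhoods $U(x,f_n)$ meet $L$ only in the isolated point $(x,0)$). Hence $h(L)=L$, and $h$ restricts to a Euclidean self-homeomorphism of the open upper half-plane $H\setminus L$.

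Next I would analyse the induced boundary map $g:=h|_L$. Approaching an axis point vertically shows $(x_0,t)\to(x_0,0)$ in $\tau_0$ as $t\to0^+$, so continuity of $h$ and of $h^{-1}$ off the axis forces $g$ to be a homeomorphism of $L\cong\bbR$; in particular $g$ is monotone. After composing with the reflection $(x,y)\mapsto(-x,y)$—a homeomorphism of each space, since the generating functions are even—I may assume $g$ is increasing. Translating the source and the target, I fix a point of $L$ at which $g$ is differentiable (such points are dense by the Lebesgue monotone differentiation theorem) and place it at the origin, so $g(0)=0$ and $g'(0)$ exists. This is exactly the hypothesis under which inequality~(\ref{eqn:1}) is available, with the continuous variable now the height $x\to0$.

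The heart of the argument is to read the horizontal half-widths of the basic neighbourhoods topologically and to track how $h$ distorts them. At height $x$ the source neighbourhood $U(0,p_j)$ has half-width $p_j^{-1}(x)$, and since $m>n$ forces $U(0,p_m)\subseteq U(0,p_n)$ we have $p_m^{-1}\leq p_n^{-1}$, matching $\phi\leq\psi$ in~(\ref{eqn:1}) with $\phi=p_m^{-1}$ and $\psi=p_n^{-1}$. Proposition~\ref{pro1} lets me recover such widths purely topologically: two level-$j$ neighbourhoods at axis-separation $s$ first meet at height $p_j(s/2)$ over their midpoint, an incidence preserved by $h$. From continuity of $h$ at the origin I get $h(U(0,p_m))\subseteq U(0,t_1)$ for $m$ large, and from continuity of $h^{-1}$ I get, for each such $m$, a target level $k$ with $U(0,t_k)\subseteq h(U(0,p_m))$. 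Because $g$ carries the horizontal interval of half-width $p_j^{-1}(x)$ to one of half-width $\tfrac12\bigl(g(p_j^{-1}(x))-g(-p_j^{-1}(x))\bigr)$, I put $\delta(x)=\tfrac12\bigl(g(p_n^{-1}(x))-g(-p_n^{-1}(x))\bigr)$ and take $\gamma(x)$ to be the product of difference quotients appearing in~(\ref{eqn:1}), so that $\tfrac{p_m^{-1}(x)\,\delta(x)\,\gamma(x)}{p_n^{-1}(x)}=\tfrac12\bigl(g(p_m^{-1}(x))-g(-p_m^{-1}(x))\bigr)$, the target half-width at level $m$. Comparing the height $x$ of the image point against the $t_1$- and $t_k$-graphs through these two containments then yields $t_1(\delta(x))\leq t_k(\tfrac{p_m^{-1}(x)\delta(x)\gamma(x)}{p_n^{-1}(x)})$; moreover $\delta(x)\to0$ by continuity of $g$, and $\liminf_{x\to0}\gamma(x)\leq1$ is exactly~(\ref{eqn:1}).

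The main obstacle is the quantitative comparison in the last paragraph: converting the purely topological containments $h(U(0,p_m))\subseteq U(0,t_1)$ and $U(0,t_k)\subseteq h(U(0,p_m))$ into a \emph{pointwise} inequality between the shape functions valid for all small $x$. This requires controlling the vertical distortion of $h$ near the axis independently of its horizontal distortion, and it is precisely here that the differentiation input is essential: when $g'(0)\neq0$ a direct scaling suffices, but the degenerate case $g'(0)=0$ must be absorbed into the single factor $\gamma$ with $\liminf\gamma\leq1$ by using the full strength of~(\ref{eqn:1}), as anticipated in the remark following that inequality.
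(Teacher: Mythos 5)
Your overall architecture is the same as the paper's: send the axis to the axis, extract a monotone boundary map $g$, use the crossing points of two translated neighbourhoods (Proposition \ref{pro1}) together with the containments $f[U(c,p_n)]\subseteq U(g(c),t_1)$ and $U(g(c),t_k)\subseteq f[U(c,p_m)]$ to sandwich the height of an image point, and handle the degenerate case $g'(u)=0$ via Lemma \ref{lem:1}; the formulas you arrive at for $\delta$ and $\gamma$ are exactly the paper's. The main gap is that your containments are obtained only at the single point $0$: continuity of $h$ at the origin gives one index with $h(U(0,p_{n_0}))\subseteq U(0,t_1)$, but the crossing-point argument needs $h(U(c,p_n))\subseteq U(g(c),t_1)$ simultaneously at the two moving feet $c=\pm p_n^{-1}(x)$ for all small $x$ with a single fixed $n$ (and likewise a fixed $k$ for the reverse containments at $\pm p_m^{-1}(x)$). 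A priori the index $n_c$ supplied by continuity varies with $c$ and may be unbounded on every interval, in which case $\delta$ and $\gamma$ are not functions of the height alone and the displayed inequality cannot be asserted for all $x\in(0,\epsilon)$. The paper closes this with the Baire category theorem (some index is constant on a dense subset of an interval) followed by a continuity argument extending the inequality from that dense set to the whole interval; some such uniformization step is indispensable and is absent from your outline.

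The second gap is the monotonicity of $g$. It does not follow from ``$g$ is a homeomorphism of $L$'': in both $\tau_0$ and $\tau_1$ the axis carries the discrete subspace topology, so every bijection of $L$ is a homeomorphism of $L$, and it is not even clear a priori that $g$ is Euclidean-continuous. The paper proves that $g$ is monotone on some interval by a separation argument (the broken lines $s_0,\ldots,s_5$ cutting $H$ into regions $A,B,C,D$ whose images must remain disjoint, plus the Jordan curve theorem), and only after that does Lebesgue's theorem supply a differentiability point; your argument needs the same input, since the difference quotients defining $\gamma$ must be positive. Finally, your justification that $h$ preserves the axis (non-local-Euclideanness of boundary points) is only asserted, not proved; the paper's route via non-compactness of $(\cl U(0,p_n))\setminus U(0,p_m)$ is the cleaner one.
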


The proof of this criterion shall be preceded by some lemmas, hence  it is  postponed to the last section.

First, we will show how to use the lemma, showing that the topology generated by triangles with bounded angles is not homeomorphic to the Niemytzki plane.

\begin{exm}\label{exm:3}
  Let $\alpha\in(0,\frac\pi2)$, $a_n=(n\tan\frac{\alpha n}{n+1})^{-1}$,
$$\textstyle t_{\alpha,n}\colon [-a_n,a_n]\to[0,\frac1n]\mbox{ and }t_{\alpha,n}(z)=|z|\tan\frac{\alpha n}{n+1}.$$
  It is the basic family which induces a topology not homeomorphic to the Niemytzki plane.
  Indeed, suppose that there exists such a homeomorphism onto the Niemytzki plane.
  By Lemma \ref{thm:1}, there exist $\epsilon>0$ and functions $\gamma,\delta\colon (0,\epsilon)\to(0,\infty)$ satisfying the hypothesis of the theorem for basic families $\{t_{\alpha,n}\colon n\geq 1\},\{p_n\colon n\geq1\}$, where functions $p_n$ are given by the formula $p_n(x)=nx^2$.
For $x\in(0,\epsilon)$ we have $p^{-1}_m(x)=\sqrt{x/m}$, so substituting $z=\frac{p_m^{-1}(x)}{p^{-1}_n(x)}\delta(x)\gamma(x)$, in the counter and then $z=\delta(x)$ in the denominator, we get
$$
1\leq\frac{t_{\alpha,k}\big(\frac{p_m^{-1}(x)}{p^{-1}_n(x)}\delta(x)\gamma(x)\big)}{t_{\alpha,1}(\delta(x))}=\frac{\frac{\sqrt{x/m}}{\sqrt{x/n}}\delta(x)\gamma(x)\tan\frac{\alpha k}{k+1}}{\delta(x)\tan(\alpha/2)}=$$
$$    \frac{\sqrt{\frac nm}\gamma(x)\tan\frac{\alpha k}{k+1}}{\tan(\alpha/2)}.$$

Fix $k,n$ and choose $m>n$ such that
$$    \frac{\sqrt{\frac nm}\tan\alpha}{\tan(\alpha/2)}<1.$$
So, there exists $x\in(0,\epsilon)$ such that
$$\frac{\sqrt{\frac nm}\gamma(x)\tan\frac{\alpha k}{k+1}}{\tan(\alpha/2)}<\gamma(x)
\frac{\sqrt{\frac nm}\tan\alpha}{\tan(\alpha/2)}<1,$$
since $\liminf_{x\to 0^+}\gamma(x)\leq 1$; a contradiction.
\end{exm}

Note that if $\tau_1$ is the topology of the Niemytzki plane, $\tau_2$---the topology induced by  $\{t_{\alpha,n}\colon n\geq1\}$, and $\tau_3$ is the topology induced by $\{p_{1,n}\colon n\geq1\}$, then $\tau_1\subsetneq\tau_2\subsetneq \tau_3$.
In Example \ref{exm:2}, substituting $s=1$ and $s=2$, we get $(H,\tau_1)\cong (H,\tau_3)$.
But by Example \ref{exm:3}, we have $(H,\tau_1)\not\cong(H,\tau_2)$.

\begin{exm}
  Consider a basic family $w_n(x)=|x|^{(n+1)/n}$ and  suppose that the family $\{w_n\colon n\geq 1\}$ induces the topology homeomorphic to the Niemytzki plane.
Take  $p_n(x)=nx^2$ and let functions $\delta,\gamma$ be as in Lemma \ref{thm:1}.
 Then $$1\leq\frac{p_k\big(\frac{w_m^{-1}(x)}{w_n^{-1}(x)}\delta(x)\gamma(x)\big)}{p_1(\delta(x))}=\frac{k\big(|x|^{\frac m{m+1}-\frac n{n+1}}\delta(x)\gamma(x)\big)^2}{\delta(x)^2}=kx^{2(\frac m{m+1}-\frac n{n+1})}\gamma(x)^2.$$
 For any $m>n$ and $k$ we have
 $$\liminf_{x\to 0^+}kx^{2(\frac m{m+1}-\frac n{n+1})}\gamma(x)^2=0;$$
 this contradicts Lemma \ref{thm:1}.
 Thus, topologies induced by the basic families $\{p_n\colon n\geq 1\}$ and $\{w_n\colon n\geq 1\}$ are not homeomorphic.
 
 The topology induced by $\{w_n\colon n\geq 1\}$ is not homeomorphic to the topology induced by $\{t_{\alpha,n}\colon n\geq 1\}$ , where $\alpha\in(0,\frac\pi2)$ and functions $t_{\alpha,n}$ are as in Example \ref{exm:3}.
 We have
\begin{multline*}  \frac{t_{\alpha,k}\big(\frac{w_m^{-1}(x)}{w_n^{-1}(x)}\delta(x)\gamma(x)\big)}{t_{\alpha,1}(\delta(x))}=\frac{x^{\frac m{m+1}-\frac n{n+1}}\delta(x)\gamma(x)\tan((1-\frac1{k+1})\alpha)}{\delta(x)\tan(\alpha/2)}=\\
  x^{\frac m{m+1}-\frac n{n+1}}\gamma(x)\frac{\tan((1-\frac1{k+1})\alpha)}{\tan(\alpha/2)}<x^{\frac m{m+1}-\frac n{n+1}}\gamma(x)\frac{\tan\alpha}{\tan(\alpha/2)}.
\end{multline*}
If $m>n$, then
$$\liminf_{x\to 0^+}x^{\frac m{m+1}-\frac n{n+1}}\gamma(x)\frac{\tan\alpha}{\tan(\alpha/2)}=0,$$
contrary to Lemma \ref{thm:1}.
  \end{exm}

  If $\lim_{x\to 0}\phi(x)=\lim_{x\to 0}\psi(x)=0$, then the limit  $$\liminf_{x\to 0}\frac{\phi(x)}{\psi(x)}$$ may be arbitrarily large.
The following lemma shows a useful case, where such a limit has to be bounded.
  
\begin{lem}\label{lem:1}
  Let  $h,\phi,\psi\colon(0,\delta)\to(0,\infty)$ be functions such that  $\phi\leq\psi$ and $\lim_{x\to 0}h(x)=\lim_{x\to 0}\psi(x)=0$.
  If $\phi,\psi$ are continuous, then
  $$\liminf_{x\to 0}\frac{h(\phi(x))}{h(\psi(x))}\leq 1.$$
\end{lem}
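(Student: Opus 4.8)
The plan is to argue by contradiction. Suppose $\liminf_{x\to 0}\frac{h(\phi(x))}{h(\psi(x))}>1$; then there are $\lambda>1$ and $\eta\in(0,\delta)$ with $h(\phi(x))\geq\lambda\,h(\psi(x))$ for all $x\in(0,\eta)$. Since $0<\phi\leq\psi$ and $\lim_{x\to 0}\psi(x)=0$, the squeeze principle also gives $\lim_{x\to 0}\phi(x)=0$. Note first that this inequality forces $\phi(x)<\psi(x)$ throughout $(0,\eta)$: if $\phi(x)=\psi(x)$ then $h(\phi(x))=h(\psi(x))>0$, which contradicts $h(\phi(x))\geq\lambda\,h(\psi(x))$ because $\lambda>1$.

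Next I would build a strictly decreasing chain $x_0>x_1>x_2>\cdots$ in $(0,\eta)$ with $\psi(x_{k+1})=\phi(x_k)$. Starting from any $x_0\in(0,\eta)$, given $x_k$ I would locate $x_{k+1}$ by the intermediate value theorem: on $(0,x_k)$ the continuous function $\psi$ takes values arbitrarily close to $0$ (as $\psi(x)\to 0$) and values arbitrarily close to $\psi(x_k)$, hence it attains the intermediate value $\phi(x_k)\in(0,\psi(x_k))$ at some point $x_{k+1}\in(0,x_k)$. Writing $b_k=h(\psi(x_k))$, the relation $\psi(x_{k+1})=\phi(x_k)$ together with the standing inequality gives $b_{k+1}=h(\phi(x_k))\geq\lambda\,b_k$, so $b_k\geq\lambda^k b_0\to\infty$, since $b_0>0$ and $\lambda>1$.

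It remains to contradict $b_k\to\infty$, and the key step — the one I expect to be the main obstacle — is to show that the chain actually descends to $0$. The sequence $(x_k)$ is decreasing and bounded below, say $x_k\to L\geq 0$. If $L>0$, then $L\in(0,\eta)$ and, passing to the limit in $\phi(x_k)=\psi(x_{k+1})$ using the continuity of $\phi$ and $\psi$ at $L$, I obtain $\phi(L)=\psi(L)$; but then the standing inequality at $x=L$ reads $h(\phi(L))\geq\lambda\,h(\phi(L))$ with $h(\phi(L))>0$, which is impossible. Hence $L=0$, so $x_k\to 0^+$ and therefore $\psi(x_k)\to 0$, which by $\lim_{x\to 0}h(x)=0$ yields $b_k=h(\psi(x_k))\to 0$. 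This contradicts $b_k\to\infty$ and proves $\liminf_{x\to 0}\frac{h(\phi(x))}{h(\psi(x))}\leq 1$.
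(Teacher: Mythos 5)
Your proof is correct and follows essentially the same route as the paper: the key construction in both is the decreasing chain $x_{k+1}\in(0,x_k)$ with $\psi(x_{k+1})=\phi(x_k)$ obtained from the intermediate value theorem, which descends to $0$ and forces $h(\psi(x_k))$ to grow while it must tend to $0$. Your use of a margin $\lambda>1$ is a slight streamlining: it lets you dispose of the case $\phi(x)=\psi(x)$ immediately and replaces the paper's monotonicity contradiction with geometric growth, but the argument is the same in substance.
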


\begin{proof}
  Consider the case where  $0$ is a limit point of the set
  $$A=\{x>0\colon \phi(x)=\psi(x)\}.$$
If $(x_n)\subseteq A$ is a sequence convergent to $0$, then
  $$\lim_{n\to\infty}\frac{h(\phi(x_n))}{h(\psi(x_n))}=1.$$
Now, assume that $0$ is not a limit point of  $A$, so there exists  $\eta>0$ such that if $x\in(0,\eta)$, then $\phi(x)<\psi(x)$.
  Suppose that
  $$\liminf_{x\to 0}\frac{h(\phi(x))}{h(\psi(x))}>1.$$
Without loss of generality,  assume that  $\frac{h(\phi(x))}{h(\psi(x))}\geq 1$ for each $x\in(0,\eta)$.
Inductively  define a sequence $(x_k)$, putting  $x_0=\eta/2$.
  If the term $x_k$ is defined, then   $\lim_{x\to0}\psi(x)=0<\phi(x_k)<\psi(x_k)$, hence there exists  $x_{k+1}\in(0,x_k)$ such that $\psi(x_{k+1})=\phi(x_k)$, since $\psi$ is continuous.
By the definition, the sequence $(x_k)$ is decreasing, hence convergent to   $a\in[0,\eta/2]$.
 Since $\phi$ is also continuous, then 
  $$\psi(a)=\lim_{k\to\infty}\psi(x_{k+1})=\lim_{k\to\infty}\phi(x_k)=\phi(a),$$
which implies $a=0$ and $\lim_{k\to\infty}\psi(x_k)=0$.
  Moreover
  $$h(\psi(x_{k+1}))=h(\phi(x_k))\geq h(\psi(x_k)),$$
  which implies that
  $$0=\lim_{x\to 0}h(\psi(x_k))\geq h(\psi(x_0))>0;$$
  a contradiction.
\end{proof}

Below we will use a real number $x$ and the point $(x,0)$ interchangeably.
 %This is not true for a point $(x,y)$ with $y>0$: for any open neighbourhood $V$ of the point $(x,y)$ there exists a loop contained in $V\setminus\{(x,y)\}$ which cannot be contracted to a point.
Fix basic families $\{p_n\colon n\geq 1\}$ and $\{t_n\colon n\geq 1\}$, which induce the topologies $\tau_0$ and $\tau_1$, respectively.
For shorter symbols, we will use $P=(H,\tau_0,)$ and $T=(H,\tau_1)$.
Let
$$f=(f_1,f_2)\colon P\to T$$
 be a homeomorphism.
 Note that sets
 $$(\cl U(0,p_n))\setminus U(0,p_m)\mbox{ and }(\cl U(0,t_n))\setminus U(0,t_m)$$
 are not compact for any $m>n$, hence $f_2(x,0)=0$ for any $x\in\bbR$ and we define $g\colon\bbR\to\bbR$, $g(x)=f_1(x,0)$.

\begin{lem}
  There exists an interval $I\subseteq\bbR$ such that $g[I]$ is the interval with endpoints $g(\inf I),g(\sup I)$ and the restriction $g|_{I}$ is monotone.
\end{lem}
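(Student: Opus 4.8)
The plan is to establish the stronger fact that $g$ is strictly monotone on all of $\bbR$; the interval $I$ demanded by the lemma is then any compact interval, on which monotonicity together with continuity automatically gives that $g[I]$ is the interval with endpoints $g(\inf I),g(\sup I)$. First I would record the structural facts already at hand. Since the topologies $\tau_0,\tau_1$ agree with the Euclidean topology off the $x$-axis and $f_2(x,0)=0$, the homeomorphism $f$ restricts to a Euclidean self-homeomorphism of the open upper half-plane $\{(x,y):y>0\}$, and $g$ is a bijection of $\bbR$. Moreover, since $\{U(g(x),t_n):n\geq 1\}$ is a neighbourhood base at $(g(x),0)$ in $\tau_1$, continuity of $f$ gives, for every $x$ and every $n$, some $m$ with $f(U(x,p_m))\subseteq U(g(x),t_n)$. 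Because $(x,y)\in U(x,p_m)$ for all $0<y<\frac1m$, this yields the vertical-limit property $\lim_{y\to0^+}f(x,y)=(g(x),0)$ and, crucially, that the lower part of the vertical segment above $x$ is carried into the slim tangent region $U(g(x),t_n)$ at $g(x)$.

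The heart of the argument is to show that $g$ preserves betweenness: for any $a<b<c$ the value $g(b)$ lies strictly between $g(a)$ and $g(c)$. To this end I would take, in the one-point compactification of the half-plane, the vertical cut $\{b\}\times(0,\infty)$ together with $(b,0)$ and the point at infinity; it meets the $x$-axis only at $(b,0)$ and separates the open half-plane into the two pieces $\{x<b\}$ and $\{x>b\}$. Its image under $f$ is a Jordan curve through $(g(b),0)$ meeting the axis only there, since $f$ maps the axis bijectively onto itself and the interior into the interior. By the Jordan curve theorem \cite{Moise} this image cut separates the open half-plane into two components, which are $f(\{x<b\})$ and $f(\{x>b\})$. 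Now $(a,y)$ and $(c,y)$ lie on opposite sides of the domain cut, so $f(a,y)$ and $f(c,y)$ lie in opposite components; by the vertical-limit property they converge respectively to $(g(a),0)$ and $(g(c),0)$ as $y\to0^+$. Since, by the tangent-region containment, the bottom of the image cut is confined to the slim region $U(g(b),t_n)$, the cut locally separates the axis at $g(b)$ into the rays $(-\infty,g(b))$ and $(g(b),\infty)$, one belonging to each component. Hence $g(a)$ and $g(c)$ lie on different rays, i.e.\ $g(b)$ lies strictly between them.

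Betweenness for all triples forces strict monotonicity: were $g$ not monotone, some $a<b<c$ would make $g(b)$ a strict maximum or minimum of $\{g(a),g(b),g(c)\}$, contradicting the conclusion just proved. A strictly monotone bijection of $\bbR$ is automatically continuous, so for every compact interval $I$ the image $g[I]$ is exactly the interval with endpoints $g(\inf I),g(\sup I)$, and $g|_I$ is monotone; this proves the lemma with room to spare. The step I expect to be the main obstacle is precisely the boundary analysis of the second paragraph: because $f$ is only a homeomorphism of the \emph{open} half-plane and $g$ is not known in advance to be continuous, one must verify that the image cut approaches the axis \emph{only} at $(g(b),0)$ and does so in a controlled, non-spiralling manner, so that the interior separation survives the passage to the axis and correctly distributes $g(a)$ and $g(c)$ to opposite rays. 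This is exactly where the containment $f(U(b,p_m))\subseteq U(g(b),t_n)$ into a slim tangent region does the essential work, and arranging the compactification so that the Jordan curve theorem applies to the unbounded cut is the delicate technical point.
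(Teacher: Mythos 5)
Your plan --- prove that $g$ preserves betweenness for every triple $a<b<c$, hence is globally strictly monotone --- is more ambitious than what the paper does (the paper only extracts \emph{one} interval, via a bounded configuration of segments $s_0,\dots,s_5$ and the regions $A,B,C,D$ they cut out), and the overall architecture (plane separation by the image of a cut, plus tangent-neighbourhood control near the axis) is sound. But the step you yourself flag as the main obstacle is a genuine gap, and the ingredient you offer for it is not the one that closes it. The image of the vertical cut $\{b\}\times(0,\infty)$ is closed in $\tau_1$, not in the Euclidean topology, so there is no reason why it should ``approach the axis only at $(g(b),0)$'' in the Euclidean sense: its upper end, while proper in the open half-plane, may Euclidean-accumulate on a whole portion of the axis far from $g(b)$, provided it does so tangentially, below every translated graph of $t_j$. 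The containment $f[U(b,p_m)]\subseteq U(g(b),t_n)$ controls only the \emph{bottom} end of the cut (the points $f(b,y)$ with $y$ small) and says nothing about this; hence ``the cut locally separates the axis at $g(b)$ into the two rays, one belonging to each component'' does not follow from what you have written.

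What actually repairs the step is the opposite containment at the \emph{other} axis points: for each $x\neq b$ choose $m$ with $U(x,p_m)$ disjoint from the vertical line through $b$, and then, by continuity of $f^{-1}$, some $j_x$ with $U(g(x),t_{j_x})\subseteq f[U(x,p_m)]$, so that this tangent region is disjoint from the image cut. One then verifies that $\bigcup\{U(g(x),t_{j_x})\setminus\{(g(x),0)\}\colon x>b\}$ is a connected open subset of the open half-plane (nearby tangent regions overlap), hence lies in a single component of the complement of the cut, and likewise for $x<b$; since $f(a,\epsilon)$ and $f(c,\epsilon)$ eventually enter the corresponding tangent regions while lying in different components, $g(a)$ and $g(c)$ cannot lie on the same side of $g(b)$. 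With this supplied (and with the Jordan separation applied in the one-point compactification of the \emph{open} half-plane, not the closed one), your argument goes through and proves more than the lemma asks. The paper sidesteps the entire difficulty by working with a bounded configuration: its arcs are $\tau_0$-compact, so their images are Euclidean-compact and meet the axis exactly at $g(a)$, $g(b)$, $g(c)$, and no unbounded end has to be controlled.
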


\begin{proof}
  Fix $a<c$ and assume that $g(a)<g(c)$.
  If $g[(a,c)]=(g(a),g(c))$, then it remains to prove  the monotonicity.
  Otherwise, we can assume that there exists $b\in (a,c)$ such that $g(b)<g(a)$.
  Let $s_0,s_1,s_2$ be vertical segments and $s_3,s_4$ be horizontal segments and let $s_5$ be a broken line, connecting endpoints of  $s_0$ and $s_2$, arranged as in the picture on the left.
  \begin{center}
    \begin{tikzpicture}
      \draw[->](-3,0)--(2.5,0);
      \draw[fill] (-2,0) circle[radius=.05];
            \draw[fill] (0,0) circle[radius=.05];
      \draw[fill] (2,0) circle[radius=.05];
      \draw[fill] (-2,1) circle[radius=.05];
      \draw[fill] (0,1) circle[radius=.05];
      \draw[fill] (2,1) circle[radius=.05];

      \draw (-2,-.35) node {$a$};
            \draw (0,-.35) node {$b$};
      \draw (2,-.3) node {$c$};
      \draw (-2,2.5) node {$D$};
      \draw (0,1.5) node {$C$};
      \draw (1,.5) node {$B$};
      \draw (-1,.5) node {$A$};
      \draw[dashed,thick] (-2,0)--(-2,2)--(2,2)--(2,0);
      \draw[dashed,thick] (-2,1)--(2,1);
      \draw[dashed,thick] (0,1)--(0,0);
      \draw (-2.3,.5) node {$s_0$};
      \draw (-.3,.5) node {$s_1$};
      \draw (2.3,.5) node {$s_2$};
      \draw (-1,1.3) node {$s_3$};
      \draw (1,1.3) node {$s_4$};
      \draw (0,2.3) node {$s_5$};
    \end{tikzpicture}\quad\quad
        \begin{tikzpicture}
      \draw[->](-2.5,0)--(3,0);
      \draw[fill] (-2,0) circle[radius=.05];
            \draw[fill] (0,0) circle[radius=.05];
      \draw[fill] (2,0) circle[radius=.05];
      \draw[fill] (-2,1) circle[radius=.05];
      \draw[fill] (0,1) circle[radius=.05];
      \draw[fill] (2,1) circle[radius=.05];

      \draw (-2,-.35) node {$g(b)$};
            \draw (0,-.35) node {$g(a)$};
      \draw (2,-.3) node {$g(c)$};
      \draw (-2,2.5) node {$g[B]$};
      \draw (0,1.5) node {$g[C]$};
      \draw (1,.5) node {$g[D]$};
      \draw (-1,.5) node {$g[A]$};
      \draw[dashed,thick] (-2,0)--(-2,2)--(2,2)--(2,0);
      \draw[dashed,thick] (-2,1)--(2,1);
      \draw[dashed,thick] (0,1)--(0,0);
    \end{tikzpicture}
  \end{center}
Then $H\setminus (s_0\cup \ldots\cup s_5)=A\cup B\cup C\cup D$ is a union of open, with respect to the topology induced by $\{p_n\colon n\geq 1\}$, and disjoint subsets.
  Any point $x\in (a,b)$ can be connected with $a$ and $b$ by arcs disjoint from $B\cup C\cup D$.
  Thus $g(x)$ must be a point which can be connected with $g(a)$ and $g(b)$ by arcs disjoint from $g[B]\cup g[C]\cup g[D]$, hence  $g(x)\in(g(b),g(a))$.
  Arguing analogously, we get $g^{-1}(y)\in (a,b)$ for any $y\in (g(b),g(a))$.
  Thus $g[(a,b)]=(g(b),g(a))$.
  If $g(b)>g(c)$, then, arguing analogously, we get $g[(b,c)]=(g(c),g(b))$.

  Fix $x,y\in (a,b)$, $x<y$ and suppose that $g(x)<g(y)$.
  Then we have disjoint broken lines: $l$ connecting $a$ with $x$, and $q$ connecting $y$ with $b$, as in the following picture.
\begin{center}
    \begin{tikzpicture}
      \draw[->](-3,0)--(2.5,0);
      \draw[fill] (-2,0) circle[radius=.05];
      \draw[fill] (2,0) circle[radius=.05];
      \draw[fill] (-2,0) circle[radius=.05];
      \draw[fill] (-.5,0) circle[radius=.05];
      \draw[fill] (.5,0) circle[radius=.05];

\draw (-1.25,1.3) node {$l$};
\draw (1.25,1.3) node {$q$};
      \draw (-2,-.35) node {$a$};
      \draw (-.5,-.35) node {$x$};
                  \draw (.5,-.4) node {$y$};

      \draw (2,-.35) node {$b$};
      \draw[dashed,thick] (-2,0)--(-2,1)--(-.5,1)--(-.5,0);
      \draw[dashed,thick] (.5,0) -- (.5,1)--(2,1)--(2,0);
    \end{tikzpicture}\quad\quad
    \begin{tikzpicture}
      \draw (-1.25,1.3) node {$f[q]$};
\draw (2.4,1) node {$f[l]$};

      \draw[->](-3,0)--(2.5,0);
      \draw[fill] (-2,0) circle[radius=.05];
      \draw[fill] (2,0) circle[radius=.05];
      \draw[fill] (-2,0) circle[radius=.05];
      \draw[fill] (-.5,0) circle[radius=.05];
      \draw[fill] (.5,0) circle[radius=.05];

      \draw (-2,-.4) node {$g(b)$};
      \draw (-.5,-.4) node {$g(x)$};
                  \draw (.5,-.4) node {$g(y)$};

      \draw (2,-.4) node {$g(a)$};
      \draw[dashed,thick] (-2,0)--(-2,1)--(.5,1)--(.5,0);
      \draw[dashed,thick] (-.5,0) -- (-.5,1.5)--(2,1.5)--(2,0);
    \end{tikzpicture}\quad\quad
  \end{center}
  If $g(x)<g(y)$, then arcs $f[l],f[q]$ intersect; a contradiction.
  In the remaining cases we argue similarly.
\end{proof}

\noindent\emph{Proof of Lemma \ref{thm:1}.}
By the previous lemma, we can assume that the restriction $g|_{(a,b)}$ is an increasing bijection onto $(g(a),g(b))$.
Continuity of $f$ implies that for each $x\in\bbR$ there exists  $n_x$ such that 
  $$\textstyle f[U(x,p_{n_x})]\subseteq U(g(x),t_1).$$
  By the Baire Category Theorem,  there exists $n$ and a non-empty interval $(c,d)\subseteq(a,b)$ such that the set $\{x\in(c,d)\colon n_x=n\}$ is dense in $(c,d)$.
  We can assume $(c,d)=(a,b)$.
  We can also assume that neighbourhoods   $U(a,p_n),U(b,p_n)$ intersect.

  Fix $c,d\in (a,b)$ such that $c<d$ and $n_c=n_d=n$.
  Let
  \begin{equation}
    \label{eq:1}
    \textstyle
    \begin{cases}
    u(c,d)=\frac{c+d}2,\\
    w(c,d)=p_n(\frac{d-c}2).  
    \end{cases}
  \end{equation}
The point $(u(c,d),w(c,d))$ belongs to $\cl_{N_P} U(c,p_n)\cap \cl_{N_P}U(d,p_n)$, hence
 $$f(u(c,d),w(c,d))\in \cl_{N_T}U(g(c),t_1)\cap \cl_{N_T}U(g(d),t_1).$$
 The point $\textstyle (\frac{g(c)+g(d)}2,t_1(\frac{g(d)-g(c)}2))$
has the least ordinate among all points of the intersection $\cl_{N_T}U(g(c),t_1)\cap \cl_{N_T}U(g(d),t_1)$ (Proposition \ref{pro1} $(b)$ for basic family $\{t_n\colon n\geq 1\}$), hence
  \begin{equation}
    \label{eq:2}
    \textstyle t_1(\frac{g(d)-g(c)}2)\leq f_2(u(c,d),w(c,d)).
  \end{equation}
  Functions $g,t_1,f_2,u,w$ are continuous, which implies that (\ref{eq:2}) holds for any $c,d\in(a,b)$, $c<d$.
  If $(u,w)$ is a point of the bounded connected component of $H\setminus(U(a),p_n)\cap U(b,p_n))$, then there exists $c,d\in (a,b)$ such that $c<d$, $u=\frac{c+d}2$ and $p_n(\frac{d-c}2)=w$ (Proposition \ref{pro1} $(c)$ for the family $\{p_n\colon n\geq 1\}$).
  Then  $c=u-p_n^{-1}(w)$ and $d=u+p_n^{-1}(w)$.
  Now we can rewrite (\ref{eq:2}) in terms of $(u,w)$:
  \begin{equation}
    \label{eq:3}
 t_1\bigg(\frac{g(u+p_n^{-1}(w))-g(u-p_n^{-1}(w))}2\bigg)\leq f_2(u,w).
\end{equation}

Fix $m>n$.
For each $u\in(a,b)$ there exists $k_u$ such that 
  $$U(g(u),t_{k_u})\subseteq f[U(u,p_m)].$$
By the Baire Category Theorem, there exists a non-empty interval $(c,d)\subseteq (a,b)$ and $k$ such that the set $\{u\in (c,d)\colon k_u=k\}$ is dense in $(c,d)$.
  We can assume that $(c,d)=(a,b)$ and the intersection $U(g(a),t_k)\cap U(g(b),t_k)$ is non-empty.
  
Fix $a<c<d<b$.
Then the intersection $f[U(c,p_m)]\cap f[U(d,p_m)]$ is non-empty, hence $U(c,p_m)\cap U(d,p_m)\neq\emptyset$.
Let $u=\frac{c+d}2$ and $w=p_m(\frac{d-c}2)$.  
      \def\t{-.4}
      \begin{center}
        \begin{tikzpicture}[scale=1.7]
          \draw[very thick,->] (-3.5,0) -- (2.5,0);
          \draw[thick,decorate, decoration={random steps,segment length=3pt,amplitude=2pt}] (-1,0) -- (-3,2) -- (1,2) -- (-1,0);
          \draw[thick,decorate, decoration={random steps,segment length=3pt,amplitude=2pt}] (-1+1,0) -- (-3+1,2+\t) -- (1+1,2+\t) -- (-1+1,0);      
          \draw[fill,opacity=.2] (-1,0) -- (-3-3*\t,2+2*\t) -- (1+3*\t,2+2*\t) -- (-1,0);
          \draw[fill,opacity=.2] (-1+1,0) -- (-3+1-3*\t,2+2*\t) -- (1+1+3*\t,2+2*\t) -- (-1+1,0);
          \draw (-1,-.2) node {$(g(c),0)$};
          \draw (0,-.2) node {$(g(d),0)$};
        \end{tikzpicture}
      \end{center}
Now, if  $l$ is a segment connecting  $(u,w)$ with $(u,0)$, then it is disjoint from  $U(c,p_m)\cup U(d,p_m)$.
     Its image $f[l]$ is an arc, connecting $f(u,w)$ with $f(u,0)=(g(u),0)$, disjoint from  $f[U(c,p_m)\cup U(d,p_m)]$.
     This implies that the point $f(u,w)$ is in the bounded connected component of  $H\setminus f[U(c,p_m)\cup U(d,p_m)]$, hence it is in the bounded connected component $E$ of  $H\setminus (U(g(c),t_k)\cup U(g(d),t_k))$.
Thus
\begin{equation}
  \label{eq:5}
  \textstyle f_2(u,w)\leq  t_k(\frac{g(d)-g(c)}2),
\end{equation}
because $t_k(\frac{g(d)-g(c)}2)$ is the largest ordinate of all points of $E$ (Proposition \ref{pro1} $(b)$).
      Functions $f_2,g,t_k$  and  $p_m$ are continuous, which implies that (\ref{eq:5}) holds for any  $(u,w)\in E$, where $c=u-p_m^{-1}(w)$ and $d=u+p^{-1}_m(w)$.
      Substituting the last two equations to (\ref{eq:5}) we obtain 
      \begin{equation}
        \label{eq:4}
        f_2(u,w)\leq t_k\bigg(\frac{g(u+p^{-1}_m(w))-g(u-p^{-1}_m(w))}2\bigg)
      \end{equation}
      for any $(u,w)\in E$.
Let us define the following derivative quotient:
$$I(u,r,w)=\frac{g(u+r(w))-g(u-r(w))}{2r(w)},$$
for any function $r\colon(0,\infty)\to(0,\infty)$.
       From (\ref{eq:3}) and (\ref{eq:4}) we get
      $$\textstyle t_1(I(u,p_n^{-1},w)p_n^{-1}(w))\leq t_k(I(u,p_m^{-1},w)p_m^{-1}(w)).$$
Let $\delta(w)=I(u,p_n^{-1},w)p_n^{-1}(w)$ and $\gamma(w)=\frac{I(u,p_m^{-1},w)}{I(u,p_n^{-1},w)}$.
      Then
      $$ t_1(\delta(w))\leq t_k\bigg(\frac{p_m^{-1}(w)}{p_n^{-1}(w)}\delta(w)\gamma(w)\bigg),$$
     which proves the second inequality of the hypothesis of Lemma \ref{thm:1}.

There exists a point $u\in (a,b)$ such that the function $g$ is differentiable at $u$.
          If $g'(u)\neq 0$, then $\lim_{w\to 0^+}\gamma(w)=1$.
          Assume that $g'(u)=0$.
Then the function
$$\textstyle [0,\frac1m]\ni w\mapsto h(w)=
\begin{cases}
  0,&\mbox{if }w=0,\\
  \frac{g(u+w)-g(u-w)}{2w},&\mbox{if }w>0,
\end{cases}$$
is continuous.
Thus functions $h,p_m^{-1},p_n^{-1}$ (restricted to the interval $(0,\frac1m)$) satisfy assumptions of Lemma \ref{lem:1}.
Since $p_m^{-1}(w)<p_n^{-1}(w)$ for $w\in (0,\frac1m)$ and
$$\gamma(w)=\frac{h(p_m^{-1}(w))}{h(p_n^{-1}(w))},$$
it follows from Lemma  \ref{lem:1} that $\liminf_{w\to 0^+}\gamma(w)\leq 1$.\qed


\begin{thebibliography}{99}
  \bibitem{Abuzaid} D. Abuzaid, M. Alqahtani, L. Kalantan, \emph{New topologies between the usual and Niemytzki}, Appl. Gen. Topol. \textbf{21} (2020), no. 1, 71--79.
  \bibitem{Chatyrko} V. A. Chatyrko, \emph{On $n$-dimensional Niemytzki spaces}, Filomat \textbf{39} (2025), in press.
  \bibitem{Engelking} R. Engelking, \emph{General topology}, translated from the Polish by the author, second edition, Sigma Ser. Pure Math., 6, Heldermann Verlag, Berlin, 1989, viii+529 pp.
    \bibitem{Hattori} Y. Hattori, \emph{Order and topological structures of posets of the formal balls on metric spaces}, Mem. Fac. Sci. Eng. Shimane Univ. Ser. B Math. Sci. \textbf{43} (2010), 13--26.
  \bibitem{Kulesza} J. Kulesza, \emph{Results on spaces between the Sorgenfrey and usual topologies on $\mathbb{R}$}, Topology Appl. 231 (2017), 266--275.
  \bibitem{Moise} E. E. Moise, \emph{Geometric Topology in Dimensions 2 and 3}, Springer Science+Business Media, LLC, New York 1977.
    \bibitem{Seebach} L. A. Steen, J. A. Seebach, Jr., \emph{Counterexamples in topology}. Reprint of the second (1978) edition. Dover Publications, Inc., Mineola, NY, 1995. xii+244 pp.
\end{thebibliography}
\end{document}